\journal{Finite Fields and Their Applications}
\newtheorem{theorem}{Theorem}[section]
\newtheorem{lemma}[theorem]{Lemma}
\newtheorem{proposition}[theorem]{Proposition}
\newtheorem{corollary}[theorem]{Corollary}
\numberwithin{equation}{section}
\numberwithin{equation}{section}
\begin{document}

\begin{frontmatter}
	
	\title{Determinants of Matrices over Commutative Finite Principal Ideal Rings}
	\tnotetext[mytitlenote]{This work   is supported by the Thailand Research Fund  under Research Grant TRG5780065.}
	
	
	\author[mymainaddress]{Parinyawat Choosuwan}
	\ead{parinyawat.ch@gmail.com}
	
	\author[mysecondaryaddress]{Somphong Jitman \corref{mycorrespondingauthor}}
	\cortext[mycorrespondingauthor]{Corresponding author}
	\ead{sjitman@gmail.com}
	
	\author[mymainaddress]{Patanee Udomkavanich} 
	\ead{pattanee.u@chula.ac.th}
	
	\address[mymainaddress]{Department of Mathematics and Computer Science,
		Faculty of Science, \\ Chulalongkorn University,   Bangkok 10330,  Thailand}
	\address[mysecondaryaddress]{Department of Mathematics, Faculty of Science, 
		Silpakorn University, \\ Nakhon Pathom 73000,  Thailand}
	
	\begin{abstract}
 In this paper, the determinants of $n\times n$ matrices over  commutative finite chain rings and over  commutative  finite principal ideal rings  are studied.  The number of  $n\times n$ matrices over a commutative  finite  chain ring ${R}$  of a fixed determinant  $a$  is determined for all $a\in {R}$ and positive integers $n$.   Using the fact that every  commutative finite principal ideal ring is a product of commutative finite chain rings,     the  number   of  $n\times n$ matrices of a  fixed  determinant over a commutative finite principal ideal ring  is shown to be  multiplicative, and hence, it can be determined.  These results generalize the case of  matrices over the ring of  integers modulo $m$.
	\end{abstract}
	
	\begin{keyword}  determinants\sep  matrices\sep  commutative finite chain rings \sep commuticative finite principal ideal rings
		\MSC[2010]    11C20 \sep 15B33 \sep  13F10
	\end{keyword}
	
\end{frontmatter}


\thispagestyle{empty}

\section{Introduction}

Determinants   are  known for their applications in matrix theory and linear algebra, e.g., 
determining the area of a triangle via Heron's formula  in \cite{K2004},  solving linear systems using Cramer's rule in \cite{CK2010}, and determining the singularity of a  matrix. 	 Therefore, properties of matrices and  determinants of matrices  have been extensively studied  (see \cite{CK2010}, \cite{MMMPSS2008}, and references therein).
Especially, matrices over finite fields are interesting due to their rich algebraic structures and various applications.   Singularity of such matrices is useful in applications.  For example, nonsingular matrices over finite fields  are good choices for constructing  good linear codes in \cite{BN2001}.  The number of  $n\times n$ singular (resp., nonsingular) matrices over a finite field $\mathbb{F}_q$ was studied in \cite{M1984}.    As a generalization of the prime   field $\mathbb{Z}_p$,  the determinants of matrices over the ring  $\mathbb{Z}_m$ of integers modulo $m$  were studied in \cite{BM1987} and \cite{LW2007}.   The number of $n\times n$ matrices over  $\mathbb{Z}_m$ of a fixed  determinant has been first studied in \cite{BM1987} .  In \cite{LW2007}, a different and simpler technique  was  applied to  determine the number of   such    matrices over  $\mathbb{Z}_m$.

Communicative finite principal ideal rings (CFPIRs), a generalization of the ring of integers modulo $m$,  are  interesting since they have applications  in many branches of mathematics and  links to other objects.   Cyclic codes of length $n$ over the finite field $\mathbb{F}_q$  are identified with the ideals in the  principal  ideal ring $\mathbb{F}_q[x]/\langle x^n-1\rangle$ (see  \cite{LX2004}).  The ring of $n\times n$ circulant matrices over  fields is a principal ideal ring (see \cite{KS2012}).  Some nonsingular matrices over a   CFPIR have been applied in constructing good matrix product codes in \cite{FLL2014}.  
Therefore, the determinants of matrices over CFPIRs are interesting.

To the best of our knowledge,  the enumeration of $n\times n$  matrices of a fixed determinant over  CFPIRs    has not been  completed.  It is therefore of natural interest to determine the number $d_n(\mathcal{R},r)$ of  $n\times n$  matrices of determinant $r$ over  a CFPIR $\mathcal{R}$.  Note that every   CFPIR $\mathcal{R}$ is a product of  commutative finite chain rings (CFCRs). This property allows us to separate the study into two steps:  1)   determine the number   $d_n(R,a)$ of $n\times n$ matrices over a  CFCR $R$ whose determinant is   $a $  for all $n\in \mathbb{N}$ and $a \in R$,   and 2) show that the number $d_n(\mathcal{R},r)$ is multiplicative among  the isomorphic components of $r$. The number $d_n(\mathcal{R},r)$ is therefore follows.

The paper is organized as follows. In Section $2$,  some  definitions and properties of rings and matrices are recalled. In   Section $3$,  the number $d_n(R,a)$ of $n\times n$ matrices over a  CFCR   $R$  having  determinant $a$   is determined for all $a \in R$ and  $n\in \mathbb{N}$.  In Section $4$, using the fact that every  CFPIR  is isomorphic to a product of CFCRs and results in Section $3$, the number  $d_n(\mathcal{R},r)$  of $n\times n$ matrices over a  CFPIR  $\mathcal{R}$  having  determinant $r$ is determined  for  all $r \in \mathcal{R}$ and  $n\in \mathbb{N}$.
\section{Preliminaries} 

In this section, definitions and some properties of rings and matrices are recalled.

A ring  $\mathcal{R}$  with identity $1\neq 0$ is called a
\textit{commutative  finite   principal ideal ring (CFPIR)}  if  $\mathcal{R} $ is  finite commutative and every ideal of  $R$ is principal.  A ring  $R$ is  called a
\textit{commutative finite chain ring (CFCR)} if it is finite commutative and its  ideals are linearly ordered by
inclusion. The properties of CFPIRs  and CFCRs can be found in \cite{GF2002},  \cite{H2001}, and \cite{HLM2003}. For completeness,  some properties used in this paper are recalled as follows.

From the definition of a CFCR, it is not difficult to see that every ideal in a   CFCR  $R$ is principal and $R$ has a
unique maximal ideal.  Let $\gamma$ be a generator of the  maximal ideal of $R$. Then the ideals in $R$ are of the form
\[R\supsetneq \gamma R \supsetneq \gamma^2 R\supsetneq \dots \supsetneq \gamma ^{e-1} R \supsetneq \gamma ^{e} R=\{0\}.\]
The smallest
positive integer $e$ such that $\gamma^e=0$ (or equivalently, $\gamma ^{e} R=\{0\})$  is called the \textit{nilpotency
    index} of $R$.  Since $\gamma R$ is  maximal in $R$, the  quotient ring $R/\gamma R$ is a finite  field and it is called the {\em residue field} of $R$. Both the characteristic and the cardinality
of a CFCR are  powers
of the characteristic of its residue field.  Denote by $U(R)$ the set of units in $R$. Then we have the following properties.

\begin{lemma}[{\cite{H2001} and \cite{HLM2003}}]  \label{lem:propCR}
    Let $R$ be a CFCR of nilpotency index $e$ and let $\gamma$ be  a generator of the maximal ideal of $R$.   Let $ V \subseteq R$  be  a set of representatives for
    the equivalence classes of $R$ under congruence modulo $\gamma$.
    Assume that the residue field $R/\langle \gamma\rangle$ is
    $\mathbb{F}_{q}$ for some prime power $q$. 
    Then the following statements hold.
    \begin{enumerate}[$1)$]
        \item For each $r\in R$,  there exist  unique  $a_0, a_1 ,\dots
        a_{e-1}\in
        V$ such that $$r=a_0+a_1\gamma+\dots+a_{e-1}\gamma^{e-1}. $$
        \item  $|V| = q$.
        \item $| \gamma^j R|=
        q^{e-j}$  for all $0\leq j\leq e$.
        \item $U(R)= \{a+\gamma b\mid a\in  V\setminus\{0\} \text{ and } b\in R\}$.
        \item $|U(R)|=(q-1)q^{e-1}$.
        \item For each  $0\leq i\leq e$, 
        $ R/\gamma^iR$ is a CFCR of nilpotency index $i$ and residue field $\mathbb{F}_q$.
    \end{enumerate}
\end{lemma}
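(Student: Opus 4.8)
The plan is to work entirely with the chain $R \supsetneq \gamma R \supsetneq \cdots \supsetneq \gamma^{e}R=\{0\}$ and to deduce the six statements in an order in which each rests on the previous ones. First I would record two structural facts that follow from the linear ordering of the ideals: that $R$ is local with unique maximal ideal $\gamma R$, so that $U(R)=R\setminus \gamma R$; and that $\operatorname{Ann}(\gamma^{j})=\gamma^{\,e-j}R$ for $0\le j\le e$. The latter follows by writing a nonzero $x$ as $x=u\gamma^{k}$ with $u$ a unit (possible because $xR$ is one of the listed ideals, hence $xR=\gamma^{k}R$) and checking when $x\gamma^{j}=0$. Statement~2) is then immediate: by definition $V$ is a transversal for the cosets of $\gamma R$, so reduction modulo $\gamma$ restricts to a bijection $V\to R/\gamma R=\mathbb{F}_{q}$, whence $|V|=q$.

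For statement~3) I would analyse the successive quotients of the chain. Multiplication by $\gamma^{j}$ induces a surjection $R/\gamma R \twoheadrightarrow \gamma^{j}R/\gamma^{j+1}R$, $\bar{x}\mapsto \overline{\gamma^{j}x}$; using $\operatorname{Ann}(\gamma^{j})=\gamma^{\,e-j}R\subseteq \gamma R$ for $j\le e-1$ one verifies this map is also injective, so each factor $\gamma^{j}R/\gamma^{j+1}R$ has exactly $q$ elements. Multiplying these factor sizes along the chain from $\gamma^{j}R$ down to $\gamma^{e}R=\{0\}$ gives $|\gamma^{j}R|=q^{\,e-j}$; in particular $|R|=q^{e}$.

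With the cardinality in hand, statement~1) follows most cleanly by a counting argument. Existence is the $\gamma$-adic expansion: pick $a_{0}\in V$ with $r-a_{0}\in\gamma R$, write $r-a_{0}=\gamma r_{1}$, repeat on $r_{1}$, and terminate after $e$ steps because $\gamma^{e}=0$. This shows the map $V^{e}\to R$, $(a_{0},\dots,a_{e-1})\mapsto\sum_{i}a_{i}\gamma^{i}$, is surjective; since $|V^{e}|=q^{e}=|R|$ by statements~2) and~3), it is a bijection, which is exactly the claimed existence together with uniqueness. Statements~4) and~5) then drop out: by~1) an element $r=\sum_{i}a_{i}\gamma^{i}$ lies in $\gamma R$ iff $a_{0}=0$, so the units are precisely those with $a_{0}\in V\setminus\{0\}$, i.e.\ of the form $a+\gamma b$, and $|U(R)|=|R|-|\gamma R|=q^{e}-q^{e-1}=(q-1)q^{e-1}$.

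Finally, for statement~6) the ideals of $R/\gamma^{i}R$ correspond to the ideals of $R$ containing $\gamma^{i}R$, namely $R\supseteq \gamma R\supseteq\cdots\supseteq\gamma^{i}R$, which are again linearly ordered; the image of $\gamma$ generates the maximal ideal with nilpotency index exactly $i$ (since $\gamma^{i-1}\notin\gamma^{i}R$), and the residue field $(R/\gamma^{i}R)/(\gamma R/\gamma^{i}R)\cong R/\gamma R$ is unchanged. The main obstacle is the injectivity claim in~3), equivalently the identity $\operatorname{Ann}(\gamma^{j})=\gamma^{\,e-j}R$: this is the single place where the full chain hypothesis (linear ordering of ideals), rather than mere locality, is essential, and once the factor sizes are pinned to $q$ the remaining statements are bookkeeping on the filtration.
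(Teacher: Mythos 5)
Your proof is correct, but note that the paper itself does not prove this lemma at all: it is quoted from \cite{H2001} and \cite{HLM2003}, so there is no in-paper argument to compare against; what you have supplied is a self-contained derivation from the chain-of-ideals picture that the paper records just before the lemma. The logical ordering you chose --- first $|V|=q$, then the graded pieces $\gamma^{j}R/\gamma^{j+1}R$, then the $\gamma$-adic expansion via surjectivity plus a cardinality count, then units, then quotients --- is sound and avoids circularity. The technical pivot, $\mathrm{Ann}(\gamma^{j})=\gamma^{e-j}R$, is exactly the right fact to isolate: it is what makes multiplication by $\gamma^{j}$ induce a bijection $R/\gamma R\to\gamma^{j}R/\gamma^{j+1}R$, pinning every graded piece to size $q$, and it is indeed the one place where the chain hypothesis (every ideal is some $\gamma^{k}R$) is used. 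Two small glosses you should expand in a full write-up: (i) the step from $xR=\gamma^{k}R$ to $x=u\gamma^{k}$ with $u$ a unit needs the one-line observation that writing $x=\gamma^{k}y$ with $y\in\gamma R$ would force $xR\subseteq\gamma^{k+1}R\subsetneq\gamma^{k}R$, so $y$ must lie outside the maximal ideal and hence be a unit; and (ii) statements 4) and 5) tacitly use $0\in V$, so that ``$a_{0}=0$'' and ``$a_{0}$ represents the zero coset'' coincide --- a convention already implicit in the lemma's own phrasing $V\setminus\{0\}$. Neither is a gap, just bookkeeping. Compared with the paper's route of citing the structure theory of finite chain rings, your argument buys self-containedness at essentially no cost, since it never needs the deeper classification results (e.g.\ the Galois-ring description of chain rings) that \cite{H2001} and \cite{HLM2003} develop.
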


\begin{proposition}[{\cite{GF2002}}] 
    \label{eqiv}
    Every CFPIR is a direct product of CFCRs.
\end{proposition}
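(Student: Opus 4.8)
The plan is to first decompose $\mathcal{R}$ into local rings using the fact that it is Artinian, and then to show that each local factor is a chain ring because the principal-ideal property is inherited by direct factors. I would begin by recording that a finite commutative ring is Artinian, so it has only finitely many maximal ideals, say $\mathfrak{m}_1,\dots,\mathfrak{m}_s$, and its Jacobson radical $J=\bigcap_{i=1}^{s}\mathfrak{m}_i$ is nilpotent, say $J^{N}=0$ for some positive integer $N$.

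Next I would invoke the Chinese Remainder Theorem. The maximal ideals are pairwise comaximal, hence so are their powers $\mathfrak{m}_1^{N},\dots,\mathfrak{m}_s^{N}$. For pairwise comaximal ideals the intersection equals the product, and since the ideals commute we have
\[
\bigcap_{i=1}^{s}\mathfrak{m}_i^{N}=\prod_{i=1}^{s}\mathfrak{m}_i^{N}=\Big(\prod_{i=1}^{s}\mathfrak{m}_i\Big)^{N}\subseteq J^{N}=\{0\}.
\]
Therefore the Chinese Remainder Theorem yields a ring isomorphism $\mathcal{R}\cong\prod_{i=1}^{s}\mathcal{R}/\mathfrak{m}_i^{N}$, in which each factor $R_i:=\mathcal{R}/\mathfrak{m}_i^{N}$ is a finite commutative local ring with maximal ideal $\mathfrak{m}_i/\mathfrak{m}_i^{N}$.

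It then remains to prove that every $R_i$ is a CFCR. First, the PIR property descends to direct factors: an ideal $I\subseteq R_i$ corresponds to the ideal of $\mathcal{R}$ having $I$ in the $i$-th coordinate and the full ring in every other coordinate; this ideal is principal in $\mathcal{R}$, and projecting a generator onto the $i$-th coordinate produces a single generator of $I$. Hence each $R_i$ is a local CFPIR. Finally I would show that a finite local PIR is a chain ring: writing its maximal ideal as $\langle\gamma\rangle$ (principal, since $R_i$ is a PIR) and using that $\gamma$ is nilpotent, any nonzero ideal $\langle a\rangle$ can be written as $a=u\gamma^{k}$, where $k$ is the largest integer with $a\in\langle\gamma^{k}\rangle$; maximality of $k$ forces $u\notin\langle\gamma\rangle$, so $u$ is a unit and $\langle a\rangle=\langle\gamma^{k}\rangle$. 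Thus every ideal of $R_i$ is a power of $\langle\gamma\rangle$, and the chain $R_i\supseteq\langle\gamma\rangle\supseteq\langle\gamma^{2}\rangle\supseteq\cdots$ shows that the ideals are linearly ordered by inclusion, i.e., $R_i$ is a CFCR.

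The main obstacle is the decomposition step: one must confirm that a finite commutative ring genuinely splits as a product of local rings. The delicate point is ensuring that $\bigcap_{i}\mathfrak{m}_i^{N}=\{0\}$ for a common exponent $N$, which relies on the nilpotence of the Jacobson radical together with the comaximality of the powers of the maximal ideals; once this is in hand, both the descent of the PIR property and the identification of the local factors as chain rings are routine.
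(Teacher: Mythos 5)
Your proof is correct, but there is nothing in the paper to compare it against step by step: the paper states this proposition as a known structure theorem and simply cites it to \cite{GF2002}, offering no argument of its own. What you have written is essentially the standard textbook proof that this citation points to: (i) a finite commutative ring is Artinian, hence has finitely many maximal ideals $\mathfrak{m}_1,\dots,\mathfrak{m}_s$ and nilpotent Jacobson radical; (ii) the Chinese Remainder Theorem applied to the pairwise comaximal ideals $\mathfrak{m}_i^{N}$ splits $\mathcal{R}$ into the finite local rings $\mathcal{R}/\mathfrak{m}_i^{N}$; (iii) the principal-ideal property passes to each factor; (iv) a finite local PIR with maximal ideal $\langle\gamma\rangle$ is a chain ring, since every nonzero ideal $\langle a\rangle$ equals $\langle\gamma^{k}\rangle$ for the largest $k$ with $a\in\langle\gamma^{k}\rangle$. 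All four steps are sound. Two small remarks: in (iii) a shorter route is to observe that each $R_i$ is a \emph{quotient} of $\mathcal{R}$ and any quotient of a PIR is again a PIR, which avoids the coordinate-wise generator argument; and in (iv) the nilpotence of $\gamma$, which your argument needs in order to guarantee that the largest such $k$ exists, deserves an explicit word --- it is immediate in your construction because $(\mathfrak{m}_i/\mathfrak{m}_i^{N})^{N}=0$. The net effect is that your version makes the paper self-contained at this point, whereas the authors chose to outsource the result to the literature.
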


%

Given a commutative ring $\mathfrak{R}$ and a positive integer $n$, let  $M_n(\mathfrak{R})$ denote the set of $n\times n$ matrices over the ring $\mathfrak{R}$.  Denote by  $GL_n(\mathfrak{R})$ the set of  invertible matrices in $M_n(\mathfrak{R})$. Equivalently,  $A\in GL_n(\mathfrak{R})$ if and only if $\det(A)$ is a unit in  $\mathfrak{R}$.

Denote by  $D_n(\mathfrak{R},a)$  the set of $n\times n$ matrices over $\mathfrak{R}$ whose determinant is $a$ and let $d_n(\mathfrak{R},a)=|D_n(\mathfrak{R},a)|$.  
The number   $d_n(\mathbb{F}_q,0)$ was studied in \cite{M1984} and extended to cover the number  $d_n(\mathbb{Z}_m,a)$   for all $a\in  \mathbb{Z}_m$ and for all positive integers $n$ in  \cite{BM1987}  and \cite{LW2007}. 
In  this paper, we focus on $d_n(\mathfrak{R},a)$ in the case where  $\mathfrak{R}$  is  CFCRs and   CFPIRs  which generalizes the results over $\mathbb{Z}_m$  in   \cite{BM1987}  and  \cite{LW2007}.

\section{Determinants of Matrices over Finite Chain Rings}

In  this section, we focus on the number $d_n(R,a)$ of $n\times n$ matrices  over  a CFCR $ R$. 

Let $R$ be  a CFCR of nilpotency index $e$ and residue field $\mathbb{F}_q$ and  let $\gamma$  be a generator  of the maximal ideal of $R$. For each $a\in R$,  by Lemma \ref{lem:propCR}, it is not difficult to see that $a =\gamma^sb$   for some $0\leq s\leq e$ and unit  $b\in U(R)$.  Precisely, $a=0$ if $s=e$, $a$ is a unit if $s=0$, and $a =\gamma^sb$ is a zero-divisor if $1\leq s\leq e-1$.

For each $a\in R$ and $n\in \mathbb{N}$, the formula of  $d_n(R,a)$  can be  determined using the above  three types of elements in $R$ summarized in the following diagram.

\begin{figure}[H]
    \centering
    \begin{tikzpicture} [
    block/.style    = { rectangle, draw=black, 
        fill=black!5, text width=9em, text centered,
        rounded corners, minimum height=1.8em },
    line/.style     = { draw, thick, ->, shorten >=2pt },
    ]
    \matrix [column sep=5mm, row sep=8  mm] {
        
        & \node [block] (oria) {$a \in R$ };            & \\ %
        
        & \node [block] (a) {$a=\gamma^s b$, $0\leq s \leq e$ and $b \in U(R)$ }; \\
        
        \node [block] (bri) {$a=\gamma^s b$, $0\leq s <e$   }; &  &\node [block] (a0) {$a =0$ };  \\ 
        
        \node [block] (aur1) {$d_n(R,a)=d_n(R,\gamma^s)$ };  && \\
        
        \node [block] (au1) {$a=1$ };  	   &\node [block] (rs) {$a=\gamma^s$ };  &	\node [block] (a01) {$d_n(R,0)$ };  \\
        
        \node [block] (au11) {$d_n(R,1) $ };  	   &\node [block] (rs1) {$ d_n(R,\gamma^s)$ };  &	  \\
        
        &\node [block] (end) {$ d_n(R,a)$ };  &	\\ 
    };
    \begin{scope} [every path/.style=line]
    \path (oria)        --   node{\quad\quad \quad\quad \quad \quad Lemma \ref{lem:propCR}}   (a); 
    
    \path (a)        --   node{$0\leq s <e$\quad\quad\quad \quad \quad  \quad \quad }   (bri); 
    \path (a)        --  node{\quad \quad \quad \quad $s=e$}  (a0);

    \path (bri)        --    node{\quad\quad\quad  \quad\quad\quad  Theorem \ref{ar=r}}(aur1);  
    
    \path (aur1)        --    node{\quad\quad\quad $e=0$}(au1);  
    \path (aur1)        --     node{\quad\quad\quad \quad\quad\quad \quad\quad $1 \leq s<e$}(rs);


    \path (a0)        --    node{\quad\quad\quad \quad\quad\quad  Theorem \ref{thma0}}(a01);  
    
    \path (au1)        --  node{\quad\quad\quad \quad\quad\quad  Theorem \ref{a=1}}  (au11);  
    \path (rs)        --   node{\quad\quad\quad \quad\quad\quad  Theorem \ref{a=r}} (rs1);

    \path (au11)        --    (end);  
    \path (rs1)        --    (end);  
    \path (a01)        --    (end);  
    \end{scope}
    %
    \end{tikzpicture}
    \caption{Steps in computing $d_n(R,a)$ over a CFCR  $R$}
\end{figure}
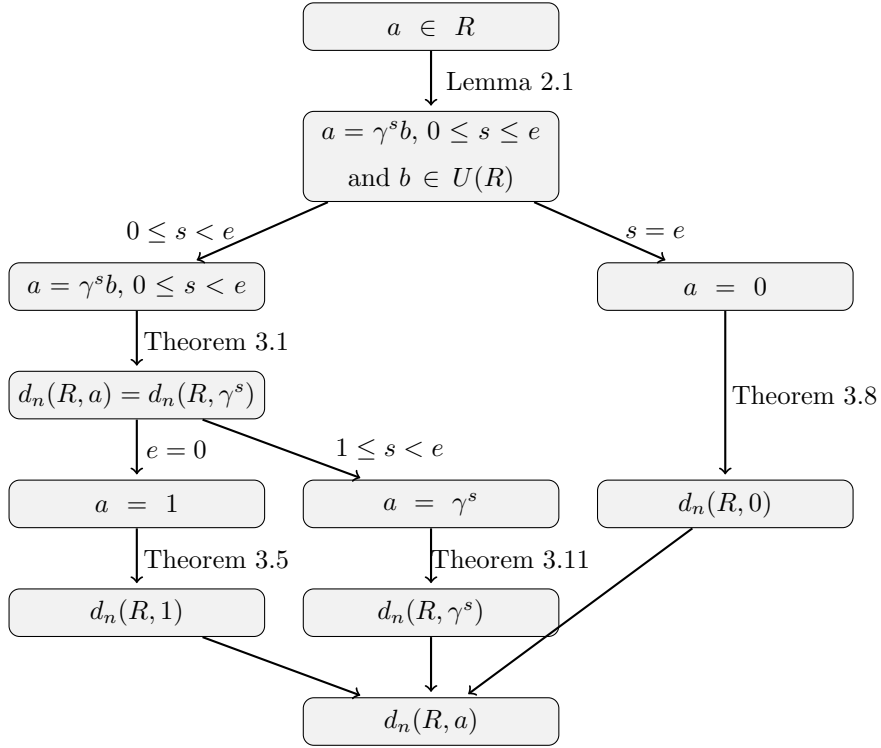

To simplify the computation, we give a relation between the number $d_n(R,\gamma^s)$ and  $d_n(R,\gamma^sb)$  for all units $b\in U(R)$ and integers $0\leq s\leq e$. 

\begin{theorem} \label{ar=r} Let $R$ be a CFCR of nilpotency index $e$ and residue field $\mathbb{F}_q$. If the maximal ideal of $R$ is generated by $\gamma$ and $0\leq s\leq e$, then  \[d_n(R,\gamma^s)=d_n(R,b\gamma^s)\] for all units $b$ in $U(R)$.
\end{theorem}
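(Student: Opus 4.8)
The plan is to prove the equality of cardinalities by exhibiting an explicit bijection between $D_n(R,\gamma^s)$ and $D_n(R,b\gamma^s)$. Since $b$ is a unit, scaling a single row of a matrix by $b$ multiplies its determinant by $b$ and is a reversible operation, so this row operation should carry matrices of determinant $\gamma^s$ bijectively onto matrices of determinant $b\gamma^s$.

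Concretely, let $E_b\in M_n(R)$ be the diagonal matrix $\mathrm{diag}(b,1,\dots,1)$. Because $b\in U(R)$, we have $\det(E_b)=b\in U(R)$, so $E_b\in GL_n(R)$ with inverse $E_{b^{-1}}=\mathrm{diag}(b^{-1},1,\dots,1)$. I would define $\phi\colon M_n(R)\to M_n(R)$ by $\phi(A)=E_bA$. By multiplicativity of the determinant over the commutative ring $R$, one has $\det(\phi(A))=\det(E_b)\det(A)=b\det(A)$.

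First I would check that $\phi$ maps $D_n(R,\gamma^s)$ into $D_n(R,b\gamma^s)$: if $\det(A)=\gamma^s$, then $\det(E_bA)=b\gamma^s$, so $\phi(A)\in D_n(R,b\gamma^s)$. Next I would verify that $\phi$ restricts to a bijection between these two sets. Since left multiplication by the invertible matrix $E_b$ is a bijection on all of $M_n(R)$, with two-sided inverse given by left multiplication by $E_{b^{-1}}$, the map $A'\mapsto E_{b^{-1}}A'$ carries $D_n(R,b\gamma^s)$ back into $D_n(R,\gamma^s)$ (by the same determinant computation applied to $b^{-1}$) and is inverse to $\phi$. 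Hence $\phi$ restricts to a bijection $D_n(R,\gamma^s)\to D_n(R,b\gamma^s)$, which yields $d_n(R,\gamma^s)=d_n(R,b\gamma^s)$.

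There is essentially no hard step here: the argument is purely a matter of transporting matrices by an invertible row operation, and the only facts it uses are multiplicativity of the determinant and that $\det(E_b)=b$ is a unit precisely because $b$ is. The one point worth stating carefully is that the inverse map lands back in the correct determinant class, so that $\phi$ and its inverse genuinely restrict to the two subsets in question; once that is noted, the equality of cardinalities is immediate.
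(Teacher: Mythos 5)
Your proof is correct and follows essentially the same route as the paper: both establish the bijection $D_n(R,\gamma^s)\to D_n(R,b\gamma^s)$ by left multiplication by the invertible diagonal matrix $\mathrm{diag}(b,1,\dots,1)$ and invoke multiplicativity of the determinant. The only cosmetic difference is that you treat all $0\leq s\leq e$ uniformly and verify the inverse map explicitly, whereas the paper splits off the trivial case $s=e$ and argues bijectivity directly from invertibility of the diagonal matrix.
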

\begin{proof}  
    Let $b$ be a unit in $U(R)$ and let $0\leq s \leq e$ be an integer. If $s=e$, then $ \gamma^s=0=\gamma^sb$, and hence, we have $d_n(R,\gamma^s)= d_n(R,0)=d_n(R,b\gamma^s)$.  
    
    For each $0\leq s<e$, let $\alpha: D_n(R,\gamma^s)\to D_n(R,b\gamma^s)$  be a map defined by 
    \[\alpha(A)=\mathrm{diag}(b,1,\dots,1)A\]
    for all $A\in D_n(R,\gamma^s)$.   Note that,  for each  $A\in D_n(R,\gamma^s) $, $\det(A)= \gamma^s$ if  and only if $\det( \mathrm{diag}(b,1,\dots,1)A) =b\gamma^s$. It follows that $\alpha$ is well-defined. Since $b$ is a unit, the matrix $\mathrm{diag}(b,1,\dots,1)$ is invertible  which implies that $\alpha$ is a bijection.
    Therefore,  
    we have  \[d_n(R,\gamma^s)=|D_n(R,\gamma^s)|=|D_n(R,b\gamma^s)|=d_n(R,b\gamma^s)\]
    as desired.
\end{proof}

In the case where  $s=0$,  we have the following corollary. 
\begin{corollary} \label{a1} Let $R$ be a CFCR and let $n$ be a positive integer. Then  \[d_n(R,a)=d_n(R,1)\] for all units $a\in U(R)$.
\end{corollary}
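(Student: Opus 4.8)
The plan is to obtain this as an immediate specialization of Theorem \ref{ar=r} to the case $s=0$. The key observation is that $\gamma^0$ equals the identity $1$ of $R$, so the single instance $s=0$ of the theorem already reads $d_n(R,1)=d_n(R,b\gamma^0)=d_n(R,b)$ for every unit $b\in U(R)$. Since an arbitrary unit $a\in U(R)$ can be written as $a=\gamma^0 a$ with $a$ itself playing the role of the unit $b$, this substitution covers all units at once.

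Concretely, I would begin by fixing a positive integer $n$ and an arbitrary unit $a\in U(R)$, and then invoke Theorem \ref{ar=r} with the exponent $s=0$ and the unit chosen to be $a$. The theorem directly yields
\[
d_n(R,1)=d_n(R,\gamma^0)=d_n(R,a\gamma^0)=d_n(R,a),
\]
which is exactly the claimed equality. Because $a$ was an arbitrary unit, the conclusion holds for all of $U(R)$.

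There is essentially no obstacle here: all the genuine content, namely the bijection $A\mapsto \mathrm{diag}(b,1,\dots,1)A$ between $D_n(R,\gamma^s)$ and $D_n(R,b\gamma^s)$, has already been established in the proof of Theorem \ref{ar=r}. The only point worth stating explicitly is the identification $\gamma^0=1$ together with the remark that multiplying a fixed unit $\gamma^0=1$ by an arbitrary unit $b$ sweeps out the entire unit group $U(R)$, so no units are missed. Thus the corollary requires no new argument beyond this specialization.
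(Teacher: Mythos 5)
Your proposal is correct and is exactly the paper's route: the corollary is stated there as the immediate specialization of Theorem \ref{ar=r} to $s=0$, using $\gamma^0=1$ so that $d_n(R,1)=d_n(R,b\cdot 1)=d_n(R,b)$ for every unit $b$. No further argument is needed.
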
     

Next, the number $d_n(R,a)$ is determined  in three cases depending on the types of $a$, i.e., 1) $a$ is a unit, 2) $a$ is a zero-divisor, and 3) $a=0$. 

\subsection{The Number $d_n(R,a)$: $a$ is a Unit in $R$}
In this subsection,  we focus on   $d_n(R,a)$ in the case where $a$ is a unit in $U(R)$.  By Corollary \ref{a1}, it is suffices to determine only $d_n(R,1)$. 

First, the cardinality of $ GL_n(\mathbb{F}_q)$ which is key to determine the number $d_n(R,1)$ is recalled.
\begin{lemma}[{\cite{M1984}}]\label{GLnF} Let $q$ be a prime power and let $n$ be a positive integer. Then 
    \[|GL_n(\mathbb{F}_q)|=q^{n^2}\prod\limits _{i=1}^n (1-q^{-i}).\]
\end{lemma}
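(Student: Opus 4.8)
The plan is to count the invertible matrices by counting ordered bases of $\mathbb{F}_q^n$, since $A\in GL_n(\mathbb{F}_q)$ precisely when its rows are linearly independent over $\mathbb{F}_q$, i.e., when they form a basis of the vector space $\mathbb{F}_q^n$. First I would build such a matrix one row at a time. The first row may be any nonzero vector, giving $q^n-1$ choices. Having chosen the first $i-1$ rows to be linearly independent, they span a subspace of size $q^{i-1}$, so the $i$-th row may be any vector lying outside this span, giving $q^n-q^{i-1}$ admissible choices. Multiplying over $i=1,\dots,n$ yields
\[|GL_n(\mathbb{F}_q)| = \prod_{i=1}^{n} \left(q^n - q^{i-1}\right).\]

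Next I would convert this product into the stated form. Factoring $q^{i-1}$ out of the $i$-th factor gives $q^n-q^{i-1}=q^{i-1}\bigl(q^{n-i+1}-1\bigr)$, so the total power of $q$ extracted is $q^{0+1+\cdots+(n-1)}=q^{n(n-1)/2}$, while the remaining factors, after re-indexing by $j=n-i+1$, form $\prod_{j=1}^{n}(q^j-1)$. This gives the intermediate expression $|GL_n(\mathbb{F}_q)|=q^{n(n-1)/2}\prod_{j=1}^{n}(q^j-1)$. Finally, writing $q^j-1=q^j(1-q^{-j})$ and collecting the powers of $q$, using $\sum_{j=1}^{n}j=n(n+1)/2$ together with $n(n-1)/2+n(n+1)/2=n^2$, recovers the desired form $q^{n^2}\prod_{i=1}^{n}(1-q^{-i})$.

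Since this is a classical result (here merely recalled from \cite{M1984}), there is no serious obstacle. The only combinatorial point requiring a clear statement is that a vector chosen outside the span of $i-1$ linearly independent vectors leaves exactly $q^n-q^{i-1}$ choices; this is immediate once one observes that a subspace of dimension $i-1$ over $\mathbb{F}_q$ has exactly $q^{i-1}$ elements. The only place demanding care is the bookkeeping of the exponents of $q$ when passing between the three equivalent forms of the answer, but this is a routine algebraic verification.
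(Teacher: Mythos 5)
Your proposal is correct and complete. Note that the paper itself gives no proof of this lemma at all --- it is simply recalled from \cite{M1984} as a known fact --- so there is no internal argument to compare against; your row-by-row basis-counting argument is the classical proof of this statement. The count $\prod_{i=1}^{n}\left(q^{n}-q^{i-1}\right)$ is right (each invertible matrix arises exactly once from a sequence of choices, since its rows determine the choices), and your exponent bookkeeping, $q^{n(n-1)/2}\cdot q^{n(n+1)/2}=q^{n^{2}}$ after writing $q^{j}-1=q^{j}\left(1-q^{-j}\right)$, checks out, so the stated form $q^{n^{2}}\prod_{i=1}^{n}\left(1-q^{-i}\right)$ follows.
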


Next, the cardinality of  $GL_n(R)$ is determined. 
\begin{lemma} \label{GLnR} Let $R$ be a CFCR of nilpotency index $e$ and residue field $\mathbb{F}_q$ and let $n$ be a positive integer.  Then \[|GL_n(R)|=q^{en^2}\prod\limits _{i=1}^n (1-q^{-i}).\]
\end{lemma}
\begin{proof} In the case where $e=1$,  we have $R=\mathbb{F}_q$ and   \[|GL_n(R)|=|GL_n(\mathbb{F}_q)|=q^{n^2}\prod\limits_{i=1}^n (1-q^{-i})\]  by Lemma \ref{GLnF}.
    
    Assume that $e\geq 2$.
    Let $\gamma$ be a generator of the maximal ideal of $R$  and let  $\beta:M_n(R)\to M_n(R/ \gamma^{e-1}R)$ be a ring homomorphism defined by 
    \[\beta(A)=\overline{A},\]
    where  $\overline{[a_{ij}]}:=[a_{ij} +\gamma^{e-1}R]$ for all $[a_{ij}]\in M_n(R)$.   
    
    \noindent  Then $A\in \ker (\beta) $ if and only if the entries of $A$ are in $\gamma^{e-1}R$. By Lemma \ref{lem:propCR}, $|\ker (\beta)|= |\gamma^{e-1}R|^{n^2}=q^{n^2}$.  By the $1$st Isomorphism Theorem for rings, we have 
    \begin{align*}
        |M_n(R)|&=|\ker(\beta)||M_n(R/ \gamma^{e-1}R)|\\
        &=q^{n^2}|M_n(R/ \gamma^{e-1}R)|.
    \end{align*}
    
    For each $B\in M_n(R/ \gamma^{e-1}R)$, we have $\beta^{-1}(B)=\{A+\ker(\beta)\}$, where  $A\in M_n(R)$ is such that $\beta(A)=B$. 
    Note that   $A\in M_n(R)$ is invertible if and only if $\beta(A)$ is a unit  in $M_n(R/ \gamma^{e-1}R)$. It follows that, for each $B\in GL_n(R/ \gamma^{e-1}R)$, we  have \[\beta^{-1}(B)\subseteq GL_n(R) \text{ and }  |\beta^{-1}(B)|=|\ker(\beta)|.\]
    Hence,  \begin{align*}
        |GL_n(R)|&=|\ker(\beta)||GL_n(R/ \gamma^{e-1}R)|\\
        &=q^{n^2}|GL_n(R/ \gamma^{e-1}R)|.
    \end{align*}
    Continue this process, it can be concluded that 
    \begin{align*}
        |GL_n(R)|
        &=q^{n^2}|GL_n(R/ \gamma^{e-1}R)|\\
        &=q^{n^2}q^{n^2}|GL_n(R/ \gamma^{e-2}R)|\\
        &\ \ \vdots\\
        &=q^{(e-1)n^2} |GL_n(R/ \gamma R)|\\ 
        &=q^{(e-1)n^2} |GL_n(\mathbb{F}_q)|.
    \end{align*}
    By  Lemma \ref{GLnF},  we have  \[GL_n(\mathbb{F}_q)|=q^{n^2}\prod_{i=1}^n (1-q^{-i}),\] and hence, 
    \[|GL_n(R)| =q^{(e-1)n^2} |GL_n(\mathbb{F}_q)|= q^{en^2}\prod_{i=1}^n (1-q^{-i})\]
    as desired.
\end{proof}

The number of $n\times n$ matrices of determinant $1$ over a CFCR $R$ is now ready to determine in the next theorem. 

\begin{theorem}\label{a=1} Let $R$ be a CFCR of nilpotency index $e$ and residue field $\mathbb{F}_q$ and let $n$ be a positive integer. Then  \[d_n(R,1)=q^{e(n^2-1)}\prod_{i=2}^n (1-q^{-i}).\]  
\end{theorem}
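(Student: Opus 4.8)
The plan is to realize $d_n(R,1)$ as a single fiber of the determinant map restricted to the group of units $GL_n(R)$, whose total size we already know from Lemma \ref{GLnR}. Observe that a matrix $A$ lies in $GL_n(R)$ precisely when $\det(A)\in U(R)$, so $GL_n(R)=\bigcup_{b\in U(R)} D_n(R,b)$ is the disjoint union of the fibers $D_n(R,b)$ as $b$ ranges over the units of $R$. This immediately gives the counting identity
\[
|GL_n(R)|=\sum_{b\in U(R)} d_n(R,b).
\]

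The next step is to show every summand equals $d_n(R,1)$. This is exactly the content of Corollary \ref{a1} (the $s=0$ case of Theorem \ref{ar=r}): the map $A\mapsto \mathrm{diag}(b,1,\dots,1)A$ is a bijection $D_n(R,1)\to D_n(R,b)$ for each unit $b$, so $d_n(R,b)=d_n(R,1)$ for all $b\in U(R)$. Consequently the sum collapses to $|GL_n(R)|=|U(R)|\,d_n(R,1)$, and hence
\[
d_n(R,1)=\frac{|GL_n(R)|}{|U(R)|}.
\]

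Finally I would substitute the two known quantities and simplify. Using $|GL_n(R)|=q^{en^2}\prod_{i=1}^n(1-q^{-i})$ from Lemma \ref{GLnR} and $|U(R)|=(q-1)q^{e-1}$ from Lemma \ref{lem:propCR}, the key simplification is to write $(q-1)q^{e-1}=q^{e}(1-q^{-1})$ so that the denominator cancels both the factor $q^{e}$ (lowering the exponent of $q$ from $en^2$ to $e(n^2-1)$) and the $i=1$ term $(1-q^{-1})$ of the product (leaving $\prod_{i=2}^n(1-q^{-i})$). This yields the claimed formula directly.

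I do not expect a genuine obstacle here: the argument is purely a fiber-counting step resting on the already-established bijection of Corollary \ref{a1} and the cardinality of $GL_n(R)$ from Lemma \ref{GLnR}. The only point requiring a line of care is confirming that the determinant of every matrix in $GL_n(R)$ is indeed a unit and, conversely, that every unit is attained (both of which follow from the characterization $A\in GL_n(\mathfrak R)\iff\det(A)\in U(\mathfrak R)$ recorded in the preliminaries, together with the diagonal matrices $\mathrm{diag}(b,1,\dots,1)$ witnessing surjectivity onto $U(R)$), after which the computation is routine algebra.
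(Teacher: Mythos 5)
Your proposal is correct and follows essentially the same route as the paper's own proof: decompose $GL_n(R)$ as the disjoint union of the fibers $D_n(R,b)$ over units $b$, use Corollary \ref{a1} to see all fibers have the same size, and divide $|GL_n(R)|$ from Lemma \ref{GLnR} by $|U(R)|=(q-1)q^{e-1}$ from Lemma \ref{lem:propCR}. The simplification $(q-1)q^{e-1}=q^e(1-q^{-1})$ you highlight is exactly the cancellation the paper performs, so there is nothing to add.
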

\begin{proof}
    From the definition of $GL_n(R)$, it  follows  that $GL_n(R)$ is the  disjoint union of    $D_n(R,a)$ for all units $a\in U(R)$.  Precisely,
    \[GL_n(R)=\bigcup_{a\in U(R)} D_n(R,a)\]
    and $D_n(R,a)\cap D_n(R,b)=\emptyset$ for all $a\ne b$ in $U(R)$.
    
    By Corollary \ref{a1},  $d_n(R,1)=d_n(R,a)=|D_n(R,a)|$ is independent of $a$  for all units $a\in U(R)$. Hence, 
    \begin{align*} 
        |GL_n(R)|&= \sum_{a\in U(R)}  |D_n(R,a)|\\
        &=|U(R)|d_n(R,1).
    \end{align*}
    Therefore,  by Lemma \ref{lem:propCR} and Lemma \ref{GLnR}, it can be concluded that 
    \begin{align*} 
        d_n(R,1)&=\frac{|GL_n(R)|}  {|U(R)|}   \\
        &=\frac{q^{en^2}\prod\limits_{i=1}^n (1-q^{-i})}{(q-1)q^{e-1}}\\
        &=q^{e(n^2-1)}\prod_{i=2}^n (1-q^{-i})
    \end{align*}
    as desired.
\end{proof}

From Corollary \ref{a1} and Theorem \ref{a=1}, the next corollary follows immediately.
\begin{corollary}
    Let $R$ be a CFCR of nilpotency index $e$  and residue field $\mathbb{F}_q$ and let $n$ be a positive integer.  Then \[d_n(R,a)=d_n(R,1)=q^{e(n^2-1)}\prod_{i=2}^n (1-q^{-i})\] for all units $a$ in $R$.
\end{corollary}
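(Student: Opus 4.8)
The plan is to combine the two preceding results directly, since the statement is essentially a repackaging of them. First I would fix a positive integer $n$ and an arbitrary unit $a \in U(R)$. By Corollary \ref{a1}, the count $d_n(R,a)$ does not depend on which unit is chosen and equals $d_n(R,1)$; this establishes the first equality in the claim for every such $a$. I would then invoke Theorem \ref{a=1}, which supplies the explicit closed form $d_n(R,1)=q^{e(n^2-1)}\prod_{i=2}^n (1-q^{-i})$, to get the second equality. Chaining these two gives $d_n(R,a)=d_n(R,1)=q^{e(n^2-1)}\prod_{i=2}^n (1-q^{-i})$ for all units $a$, as desired.

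Since both ingredients are already in hand, there is no real obstacle here; the work was done in proving Theorem \ref{ar=r} and Theorem \ref{a=1}. If I wanted the argument to be self-contained rather than quoting Corollary \ref{a1} as a black box, I would instead recall its one-line justification, which is the $s=0$ specialization of Theorem \ref{ar=r}: because $\gamma^0=1$, the map $A\mapsto \mathrm{diag}(a,1,\dots,1)A$ is a bijection from $D_n(R,1)$ onto $D_n(R,a)$, so the two sets have equal cardinality and $d_n(R,a)=d_n(R,1)$. The only point requiring care is that this bijection needs $a$ to be a unit, so that $\mathrm{diag}(a,1,\dots,1)$ is invertible and the inverse map is well-defined; this is exactly the hypothesis $a\in U(R)$, so no extra condition is needed.
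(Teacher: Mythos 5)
Your proof is correct and matches the paper exactly: the paper also deduces this corollary immediately by combining Corollary \ref{a1} (unit-independence, i.e., the $s=0$ case of Theorem \ref{ar=r}) with the closed form from Theorem \ref{a=1}. Your optional self-contained justification via the bijection $A\mapsto \mathrm{diag}(a,1,\dots,1)A$ is likewise the same argument the paper uses to prove Theorem \ref{ar=r}, so nothing further is needed.
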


\subsection{The Number $d_n(R,0)$} In this subsection, we focus the number $d_n(R,0)$.  Moreover, this number is key to determine  $d_n(R,a)$  in the case where $a$ is a zero divisor  in Subsection 3.3. 

First, we determine a relation among $d_n(R,0)$,  $d_{n-1}(R,0) $, and  $d_n(R/\gamma^{e-1}R,0+\gamma^{e-1}R)$. This relation plays an important role in determining the number $d_n(R,0)$ in Theorem \ref{thma0}.

\begin{lemma}\label{lema0}
    Let $R$ be a CFCR of nilpotency index $e$ and residue field $\mathbb{F}_q$ and let $n$ be a positive integer.  If $\gamma$  is a generator of the maximal ideal of $R$, then \begin{align*} d_n(R,0)&= \left(q^{en}-q^{(e-1)n} \right)q^{e(n-1)}d_{n-1}(R,0)  +q^{(n-1)n}d_n(R/\gamma^{e-1}R,0+\gamma^{e-1}R).\end{align*}
\end{lemma}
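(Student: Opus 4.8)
The plan is to partition the set $D_n(R,0)$ of singular matrices according to the behaviour of the first column $c_1$ modulo $\gamma$ and to recognize the two summands in the claimed identity as the cardinalities of the two resulting pieces. I would write $D_n(R,0)=S\sqcup T$, where $S$ consists of the singular matrices whose first column has at least one unit entry (equivalently $c_1\notin(\gamma R)^n$) and $T$ consists of those whose first column lies in $(\gamma R)^n$. By part $3)$ of Lemma~\ref{lem:propCR} there are $q^{en}-q^{(e-1)n}$ columns of the first type and $q^{(e-1)n}$ of the second, so the first column genuinely separates the two cases, and it remains to count each piece.

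For the piece $S$, I would fix a first column $c_1$ having a unit entry and show that the number of completions to a singular matrix is $q^{e(n-1)}d_{n-1}(R,0)$, independent of $c_1$. Since $c_1$ has a unit entry, there is an invertible matrix $P$ with $Pc_1=e_1=(1,0,\dots,0)^{T}$, built from a row swap bringing a unit to the top followed by scaling and elementary row operations; left multiplication by $P$ preserves the vanishing of the determinant because $\det(P)$ is a unit. Writing the transformed matrix in block form $\left(\begin{smallmatrix} 1 & r^{T} \\ 0 & B \end{smallmatrix}\right)$ and expanding along the first column gives determinant $\det(B)$, so a completion is singular if and only if $B\in M_{n-1}(R)$ is singular. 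As the remaining $n\times(n-1)$ block ranges over $M_{n\times(n-1)}(R)$, the pair $(r,B)$ ranges bijectively over $R^{n-1}\times M_{n-1}(R)$; imposing $\det(B)=0$ yields $q^{e(n-1)}d_{n-1}(R,0)$ completions, and multiplying by the $q^{en}-q^{(e-1)n}$ admissible columns produces the first summand.

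For the piece $T$, I would exploit multilinearity of the determinant: if $c_1=\gamma v$, then $\det A=\gamma\det A'$, where $A'$ is obtained from $A$ by replacing $c_1$ with $v$. Hence $\det A=0$ if and only if $\det A'\in\operatorname{Ann}(\gamma)=\gamma^{e-1}R$, i.e.\ if and only if the reduction of $A'$ modulo $\gamma^{e-1}$ is singular over $R/\gamma^{e-1}R$. The multiplication-by-$\gamma$ map induces an isomorphism $R/\gamma^{e-1}R\cong\gamma R$, so as $c_1$ runs over $(\gamma R)^n$ the reduced first column $\bar v$ runs bijectively over $(R/\gamma^{e-1}R)^n$, while each of the remaining $n-1$ columns has exactly $|\gamma^{e-1}R|^n=q^n$ lifts over its reduction by Lemma~\ref{lem:propCR}. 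The lifts contribute the factor $q^{n(n-1)}$, and the reduced data $(\bar v,\bar c_2,\dots,\bar c_n)$ ranges precisely over all singular matrices over $R/\gamma^{e-1}R$, giving $q^{(n-1)n}d_n(R/\gamma^{e-1}R,0+\gamma^{e-1}R)$. Adding $|S|$ and $|T|$ then yields the stated formula.

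I expect the main obstacle to be the bookkeeping in the piece $T$: one must verify that the assignment $c_1=\gamma v\mapsto\bar v$ is well defined and bijective, despite $v$ being determined only modulo $\gamma^{e-1}R$, and that the fibre of the reduction map on each free column has the constant size $q^n$, so that the total count factors cleanly as a power of $q$ times $d_n(R/\gamma^{e-1}R,0+\gamma^{e-1}R)$. In the piece $S$ the only delicate point is that the number of completions is genuinely independent of the chosen first column, which holds because left multiplication by the invertible matrix $P$ permutes the completions bijectively.
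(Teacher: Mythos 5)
Your proof is correct and follows essentially the same route as the paper: the same partition of $D_n(R,0)$ according to whether the first column contains a unit, reduction of the first piece to $q^{e(n-1)}d_{n-1}(R,0)$ completions per admissible column via row operations, and for the second piece factoring $\gamma$ out of the first column and reducing modulo $\gamma^{e-1}R$ to land on $d_n(R/\gamma^{e-1}R,0+\gamma^{e-1}R)$ with fibres of size $q^{(n-1)n}$. The only differences are organizational (you count completions of a fixed column rather than constructing many-to-one maps, and you use the isomorphism $R/\gamma^{e-1}R\cong\gamma R$ in place of the paper's explicit representative map $\psi$), which if anything makes the well-definedness issues cleaner.
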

\begin{proof}
    Let $D^\prime_n(R,0) $ and  $D^{\prime\prime}_n(R,0)$ be sets defined to be \[D^\prime_n(R,0)=\{[a_{ij}]\in D_n(R,0)\mid \exists i \in \{1,2,\dots,n\} \text{ such that }  a_{i1}\in U(R) \}\] and  \[D^{\prime\prime}_n(R,0)=\{[a_{ij}]\in D_n(R,0)\mid  a_{i1}\notin U(R)  \text{ for all }  i \in \{1,2,\dots,n\}\}.\]
    Clearly, $D_n(R,0)=D^\prime_n(R,0)\cup D^{\prime\prime}_n(R,0)$ is a disjoint union.  It  therefore remains to show that  \[|D^\prime_n(R,0)|= \left(q^{en}-q^{(e-1)n} \right)q^{e(n-1)}d_{n-1}(R,0)\]  and \[| D^{\prime\prime}_n(R,0)|= q^{(n-1)n}d_n(R/\gamma^{e-1}R,0+\gamma^{e-1}R).\]
    
    Let $\rho:  D^\prime_n(R,0) \to D^\prime_n(R,0) $ be defined by
    \[A\to E, \]
    where 
    $E$ is obtained from $A$ by applying a sequence of elementary row operations such that $E_{11}=1$ and $E_{i1}=0$ for all $2\leq i\leq n$. It is not difficult to verify that $\rho$ is a $(q^{en}-q^{(e-1)n}) $-to-one function.
    
    Let $\nu: \rho(D^\prime_n(R,0) )\to D_{n-1}(R,0)$ be defined by 
    \[A\mapsto B, \]
    where $B$ is obtained by removing the first column and the first row of $A$.  Then  $\nu$ is a surjective   $q^{e(n-1)} $-to-one function. 
    
    Note that, for each $A\in D^\prime_n(R,0)$, we have  $\det(A)=0$ if and only if $\det(\rho(A))=0$, or equivalently,  $\det(\nu(\rho(A)))=0$.  It follows that $\nu\circ \rho$ is a     $\left(q^{en}-q^{(e-1)n} \right)q^{e(n-1)} $-to-one function from   $D^\prime_n(R,0)$ onto  $D_{n-1}(R,0)$, and hence,  \[|D^\prime_n(R,0)|= \left(q^{en}-q^{(e-1)n} \right)q^{e(n-1)}d_{n-1}(R,0).\]

    Next, we determine  the  cardinality of  $D^{\prime\prime}_n(R,0)$. Observe that,  for each $[a_{ij}]\in D^{\prime\prime}_n(R,0)$, we have  $a_{i1}\in \gamma R$   for all $1\leq i\leq n$.  By Lemma \ref{lem:propCR},  for each $1\leq i\leq n$,   we have $a_{i1}=\gamma b_i$ for some  $b_i\in \sum\limits_{j=0}^{e-2} \gamma^jV$ and $ V$  is defined in    Lemma \ref{lem:propCR}. 
    Let   $\psi: D^{\prime\prime}_n(R,0) \to M_n(R)$ be defined by 
    \[  [a_{ij}]\mapsto [b_{ij}],\] where  
    \[ b_{ij}=\begin{cases} b_i &\text{ if }j=1\\
    a_{ij} &\text{ if }j\ne 1.
    \end{cases}\]  Clearly,  $\psi $ is an injective map.

    Let  $\beta:M_n(R)\to M_n(R/ \gamma^{e-1}R)$ be a surjective ring homomorphism defined as in Lemma \ref{GLnF}  by 
    \[\beta(B)=\overline{B},\]
    where  $\overline{[b_{ij}]}:=[b_{ij} +\gamma^{e-1}R]$ for all $[b_{ij}]\in M_n(R)$.  
    
    For each $A\in D^{\prime\prime}_n(R,0) $, we have  $\det(A)=\gamma\det( \psi(A))$. Hence,   $\det(A)=0$  if and only if  $\det(\psi(A))\in \gamma^{e-1} R$, or equivalently,  
    \[\det(\beta(\psi(A)))=\det(\psi(A))  +\gamma^{e-1} R = 0 +\gamma^{e-1} R.\]  It follows that  $\beta\circ \psi$ is a surjective map, and hence, \[\beta(\psi (D^{\prime\prime}_n(R,0) )) = D_n(R/\gamma^{e-1} R,0+\gamma^{e-1} R) .\]   Observe that, for each  $C\in D_n(R/\gamma^{e-1} R,0+\gamma^{e-1} R) $,  there are exactly  $q^{(n-1)n}$ matrices  in  $\psi (D^{\prime\prime}_n(R,0) ) $ whose images under $\beta$ are $C$. Since $\psi$ is injective,  it follows that  $| D^{\prime\prime}_n(R,0)|= q^{(n-1)n}d_n(R/\gamma^{e-1}R,0+\gamma^{e-1}R)$.
\end{proof}

The number of $n\times n$ matrices of determinant $0$ over $R$  can be  determined as follows.
\begin{theorem} \label{thma0} Let $R$ be a CFCR of nilpotency index $e$ and residue field $\mathbb{F}_q$ and let $n$ be a positive integer.  Then \begin{align}\label{eqa0} d_n(R,0)=q^{en^2}\left(1-\prod_{i=0}^{n-1} (1-q^{-e-i})\right).\end{align}
\end{theorem}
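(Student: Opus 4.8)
The plan is to prove the closed form by induction using the recursion in Lemma \ref{lema0}. By Lemma \ref{lem:propCR}, the quotient $R/\gamma^{e-1}R$ is itself a CFCR of nilpotency index $e-1$ and residue field $\mathbb{F}_q$, so $d_n(R/\gamma^{e-1}R,0+\gamma^{e-1}R)$ is exactly the quantity ``$d_n(\cdot,0)$'' for a chain ring of nilpotency index $e-1$. Writing $f(n,e):=d_n(R,0)$, Lemma \ref{lema0} then reads
\[f(n,e)=\left(q^{en}-q^{(e-1)n}\right)q^{e(n-1)}f(n-1,e)+q^{(n-1)n}f(n,e-1),\]
which expresses $f(n,e)$ in terms of $f(n-1,e)$ and $f(n,e-1)$. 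This suggests a double induction: an outer induction on $e$ and, for each fixed $e$, an inner induction on $n$.

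For the base cases, first note that when $n=1$ a $1\times 1$ matrix has determinant $0$ exactly when its single entry is $0$, so $f(1,e)=1$, which agrees with the formula since $q^{e}\left(1-(1-q^{-e})\right)=1$. When $e=1$ we have $R=\mathbb{F}_q$, and a square matrix over a field is singular precisely when it is not invertible; hence $f(n,1)=q^{n^2}-|GL_n(\mathbb{F}_q)|$, and Lemma \ref{GLnF} together with the reindexing $\prod_{i=1}^{n}(1-q^{-i})=\prod_{i=0}^{n-1}(1-q^{-1-i})$ shows this equals $q^{n^2}\left(1-\prod_{i=0}^{n-1}(1-q^{-1-i})\right)$, matching \eqref{eqa0}. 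I would take $e=1$ as the base of the outer induction (thus avoiding the degenerate zero ring $R/\gamma^{0}R$) and $n=1$ as the base of each inner induction.

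For the inductive step, I would substitute the two inductive hypotheses
\[f(n-1,e)=q^{e(n-1)^2}\Bigl(1-\prod_{i=0}^{n-2}(1-q^{-e-i})\Bigr),\qquad f(n,e-1)=q^{(e-1)n^2}\Bigl(1-\prod_{i=0}^{n-1}(1-q^{-(e-1)-i})\Bigr)\]
into the recursion and simplify. Collecting powers of $q$, each term factors as $q^{en^2}$ times an expression in the products: the first becomes $q^{en^2}(1-q^{-n})(1-\Pi)$ and the second $q^{en^2}q^{-n}\bigl(1-(1-q^{1-e})\Pi\bigr)$, where $\Pi:=\prod_{i=0}^{n-2}(1-q^{-e-i})$. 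The crucial observation is the index shift $\prod_{i=0}^{n-1}(1-q^{-(e-1)-i})=(1-q^{1-e})\,\Pi$, which lets both terms be written in terms of the single product $\Pi$. Adding the two terms, the $q^{-n}$ and $q^{-n}\Pi$ contributions cancel and the survivor is $q^{en^2}\bigl(1-\Pi(1-q^{-e-(n-1)})\bigr)$; recognizing $\Pi(1-q^{-e-(n-1)})=\prod_{i=0}^{n-1}(1-q^{-e-i})$ then yields exactly \eqref{eqa0}. The main obstacle is precisely this bookkeeping: matching the exponents of $q$ in the two differently shaped terms of the recursion and spotting the reindexing that forces the telescoping cancellation, so that the short product $\Pi$ is promoted back to the full product over $0\le i\le n-1$.
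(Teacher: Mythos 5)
Your proposal is correct and follows essentially the same route as the paper: a double induction on $e$ and $n$ built on the recursion of Lemma \ref{lema0}, with base cases $n=1$ (trivially $d_1(R,0)=1$) and $e=1$ (the field case via $q^{n^2}-|GL_n(\mathbb{F}_q)|$), followed by the same substitution-and-reindexing computation; your algebra, including the identity $\prod_{i=0}^{n-1}(1-q^{-(e-1)-i})=(1-q^{1-e})\Pi$ and the final telescoping to $\prod_{i=0}^{n-1}(1-q^{-e-i})$, checks out. If anything, your explicit framing of the induction (outer on $e$, inner on $n$, with $f(n,e)$ depending on $f(n-1,e)$ and $f(n,e-1)$) is stated more carefully than the paper's version of the same argument.
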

\begin{proof} We prove the statement by   induction on $e$ and  $n$.  If $e=1$, then  $R=\mathbb{F}_q$ and \eqref{eqa0}  holds by Lemma \ref{GLnR}. If $n=1$, then  $d_n(R,0)=1$   which coincides with   \eqref{eqa0}.
    
    Assume that \eqref{eqa0} holds for all $k\in\{1,2,\dots, n-1\}$ and  $f\in\{1,2,\dots,e-1\}$.  Then  
    \begin{align*}
        d_k(R,0)&= \left(q^{fk}-q^{(f-1)k} \right)q^{f(k-1)}d_{k-1}(R,0)
        +q^{(k-1)k}d_k(R/\gamma^{f-1}R,0+\gamma^{f-1}R)  \\ & \quad \text{ by  Lemma \ref{lema0},}\\
        &= \left(q^{fk}-q^{(f-1)k} \right)q^{f(k-1)}q^{f(k-1)^2}\left(1-\prod_{i=0}^{k-2} (1-q^{-f-i})\right)\\
        &\quad +q^{(k-1)k}q^{(f-1)k^2}\left(1-\prod_{i=0}^{k-1} (1-q^{-f+1-i})\right)\\
        &\quad \text{ by the induction  hypothesis,}\\
        &=q^{fk^2}-\left( q^{fk^2}-q^{(f-1)k^2+(k-1)^2+(k-1)}\right)\prod_{i=0}^{k-2} (1-q^{-f-i}) \\
        &\quad -q^{(k-1)k+(f-1)k^2}\prod_{i=0}^{k-1} (1-q^{-f+1-i})\\
        &=q^{fk^2}-q^{fk^2}\prod_{i=0}^{k-2}(1-q^{f-i})\\
        &\quad +q^{(f-1)(k-1)^2+2(f-1)(k-1)+(k-1)^2+(k-1)}\prod_{i=0}^{k-2}(1-q^{-f-i})\\
        &=q^{fk^2}-q^{fk^2}(1-q^{-f-k+1})\prod_{i=0}^{k-2}(1-q^{-f-i})\\
        &=q^{fk^2}-q^{fk^2}\prod_{i=0}^{k-1}(1-q^{-f-i})\\
        &= 	q^{fk^2}\left(1-\prod_{i=0}^{k-1}(1-q^{-f-i})\right).
    \end{align*}
    Therefore, the result follows.
\end{proof}

\subsection{The Number $d_n(R,a)$: $a$ is a Zero-Divisor in $R$}  

In this subsection, we focus on  $d_n(R,a) $  in the case where $a$ is a zero-divisor.  In this case, $a=\gamma^sb$ for some $1\leq s<e$ and $b\in U(R)$. From Theorem \ref{ar=r}, it suffices to determine only the number  $d_n(R,\gamma^s) $  for all $1\leq s<e$. 

The following preliminary results are key to determine the number  $d_n(R,\gamma^s) $.

\begin{lemma}\label{lems} Let $R$ be a CFCR of nilpotency index $e\geq 3$ and residue field $\mathbb{F}_q$ and let $n$ be a positive integer.
    If  $\gamma$ is a generator of the maximal ideal of $R$, then \[d_n(R,\gamma^s)=q^{(n^2-1)}d_n(R/\gamma^{e-1}R,\gamma^s+ \gamma^{e-1}R)\] for all $1\leq s<e-1$.
\end{lemma}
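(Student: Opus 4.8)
The plan is to exploit the reduction homomorphism $\beta\colon M_n(R)\to M_n(R/\gamma^{e-1}R)$ introduced in Lemma \ref{lema0}, whose kernel consists of the matrices with all entries in $\gamma^{e-1}R$ and therefore has cardinality $q^{n^2}$ by Lemma \ref{lem:propCR}. Writing $S=R/\gamma^{e-1}R$, since $\beta$ is a ring homomorphism we have $\det(\beta(A))=\beta(\det(A))$, so $\beta$ restricts to a map $D_n(R,\gamma^s)\to D_n(S,\gamma^s+\gamma^{e-1}R)$; note that $\gamma^s\notin\gamma^{e-1}R$ because $s<e-1$, so the target determinant is a genuine valuation-$s$ element of $S$. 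The goal is to show this restricted map is surjective and, counted correctly, $q^{n^2-1}$-to-one, which yields the stated identity.

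The core computation analyzes a single fibre. Fix $B\in D_n(S,\gamma^s+\gamma^{e-1}R)$ and a preimage $A_0\in M_n(R)$; every element of $\beta^{-1}(B)$ has the form $A_0+\gamma^{e-1}N$, and since $\gamma^{e-1}N$ depends only on $N$ modulo $\gamma$, the fibre is parametrized by $\overline N\in M_n(\mathbb{F}_q)$ and has size $q^{n^2}$. Because $(\gamma^{e-1})^2=\gamma^{2(e-1)}=0$ for $e\ge 2$, multilinearity of the determinant collapses all higher-order terms and gives the linearization
\[\det(A_0+\gamma^{e-1}N)=\det(A_0)+\gamma^{e-1}\,\mathrm{tr}\!\left(\mathrm{adj}(A_0)\,N\right).\]
Writing $\det(A_0)=\gamma^s+\gamma^{e-1}c$, the requirement $\det(A_0+\gamma^{e-1}N)=\gamma^s$ becomes $\gamma^{e-1}\!\left(c+\mathrm{tr}(\mathrm{adj}(A_0)N)\right)=0$, i.e.\ a single $\mathbb{F}_q$-linear equation $\sum_{i,j}\overline{\mathrm{adj}(A_0)_{ji}}\;\overline{N_{ij}}=-\overline c$ in the $n^2$ unknowns $\overline{N_{ij}}$. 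When the coefficient vector is nonzero, equivalently when $\mathrm{adj}(A_0)\not\equiv 0\pmod\gamma$, i.e.\ $\mathrm{rank}(\overline{A_0})=n-1$, this equation has exactly $q^{n^2-1}$ solutions, so every such fibre contributes $q^{n^2-1}$ and its $B$ automatically lies in the image. Since $\det(\overline{A_0})=0$ forces $\mathrm{rank}(\overline{A_0})\le n-1$, and a valuation-$s$ determinant with $s=1$ forces $\mathrm{rank}(\overline{A_0})=n-1$, the argument is already complete whenever $s=1$, and hence for every admissible $s$ when $e=3$.

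The main obstacle is the degenerate fibres, which occur exactly when $\mathrm{rank}(\overline{A_0})\le n-2$ (possible only for $s\ge 2$). There $\mathrm{adj}(A_0)\equiv 0\pmod\gamma$, so $\gamma^{e-1}\mathrm{adj}(A_0)=0$ and the determinant is constant on the whole fibre: the fibre contributes $q^{n^2}$ if this constant equals $\gamma^s$ and $0$ otherwise, so $\beta$ is there neither surjective nor uniformly-to-one. To recover the global count $q^{n^2-1}d_n(S,\gamma^s+\gamma^{e-1}R)$ I would prove an equidistribution statement: among the $B\in D_n(S,\gamma^s+\gamma^{e-1}R)$ with $\mathrm{rank}(\overline B)\le n-2$, the well-defined lift-determinant $\det(A_0)\in\gamma^s+\gamma^{e-1}R$ takes each of the $q$ possible values equally often, so exactly a $1/q$ fraction contributes. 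I would establish this by stratifying such $B$ according to their Smith type over $S$ and, for each stratum, perturbing $B$ additively by a term $\gamma^{e-1-t}M$ at the scale $t$ matching the smallest nonzero Smith invariant (using surjectivity of $GL_n(R)\to GL_n(S)$ to pass to Smith form and the single-entry linearity of the determinant); such a perturbation preserves $\det B$ in $S$ yet shifts the lift-determinant through all of $\gamma^s+\gamma^{e-1}R$, giving the required uniform weighting. This equidistribution step, rather than the fibre linearization, is where the real work lies; combining it with the clean contribution $q^{n^2-1}$ from the rank-$(n-1)$ fibres yields the claimed equality.
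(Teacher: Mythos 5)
Your fibre analysis is correct as far as it goes: the linearization $\det(A_0+\gamma^{e-1}N)=\det(A_0)+\gamma^{e-1}\mathrm{tr}(\mathrm{adj}(A_0)N)$ is exact for $e\geq 2$, the good fibres (those with $\mathrm{rank}(\overline{A_0})=n-1$) each contribute exactly $q^{n^2-1}$, and your observation that bad fibres cannot occur when $s=1$ does settle the case $e=3$. The genuine gap is exactly where you locate the ``real work'': the equidistribution claim for bad fibres is true, but the perturbation you propose cannot prove it. Writing $S=R/\gamma^{e-1}R$ and passing to the Smith form $\mathrm{diag}(\gamma^{a_1},\dots,\gamma^{a_n})$ with $\sum_i a_i=s$, a perturbation of the diagonal entry $\gamma^{a_j}$ at your scale $\gamma^{e-1-t}$ shifts the lifted determinant, to first order, by $\gamma^{(e-1-t)+(s-a_j)}m$; preserving the determinant in $S$ forces $(e-1-t)+(s-a_j)\geq e-1$, i.e.\ $a_j\leq s-t$, while a nonzero shift in $R$ forces $a_j\geq s-t$. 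So your mechanism produces a nonzero shift only at an entry with $a_j=s-t$, and such an entry exists only when $B$ has exactly two nonzero Smith invariants. Whenever $B$ has three or more nonzero invariants --- e.g.\ invariants $(\gamma,\gamma,\gamma)$, so $n\geq 3$, $s=3$, $e\geq 5$ --- every perturbation at scale $\gamma^{e-1-t}$, including off-diagonal and higher-order contributions, changes the lifted determinant by an element of $\gamma^{e}R=\{0\}$, i.e.\ not at all; equidistribution on those strata cannot be reached by this route. A correct repair is to use the scale $\gamma^{e-1-s+a_j}$ at entry $j$, or, more cleanly, the bijection $B\mapsto \mathrm{diag}(\overline{u},1,\dots,1)B$ with $u=1+\gamma^{e-1-s}c$, which preserves $D_n(S,\gamma^s+\gamma^{e-1}R)$ and badness and shifts every lift-determinant by exactly $\gamma^{e-1}c$ (using $2(e-1)-s\geq e$, i.e.\ $s\leq e-2$).

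The paper's proof shows that none of this stratification is needed. It counts the single set $T=\{A\in M_n(R)\mid \det(\beta(A))=\gamma^s+\gamma^{e-1}R\}$ in two ways: by full, unrestricted fibres of $\beta$, giving $|T|=q^{n^2}d_n(R/\gamma^{e-1}R,\gamma^s+\gamma^{e-1}R)$; and by exact determinant value, $T=\bigcup_{b\in V}\{A\mid \det A=\gamma^s+\gamma^{e-1}b\}$, where each piece has cardinality $d_n(R,\gamma^s)$ because $\gamma^s+\gamma^{e-1}b=\gamma^s(1+\gamma^{e-s-1}b)$ is a unit multiple of $\gamma^s$ (this is where $s<e-1$ enters) and Theorem \ref{ar=r} applies. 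Equating the two counts gives $q\,d_n(R,\gamma^s)=q^{n^2}d_n(R/\gamma^{e-1}R,\gamma^s+\gamma^{e-1}R)$, which is the lemma. Note that the unit trick $1+\gamma^{e-s-1}b$ that rescues your equidistribution step fibrewise is precisely the paper's device applied globally; once you invoke it, your entire rank stratification becomes unnecessary.
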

\begin{proof}
    Let $1\leq s<e-1$ be an integer and let   $\beta:M_n(R)\to M_n(R/ \gamma^{e-1}R)$ be a ring homomorphism defined as in Lemma \ref{GLnF}  by 
    \[\beta(A)=\overline{A},\]
    where  $\overline{[a_{ij}]}:=[a_{ij} +\gamma^{e-1}R]$ for all $[a_{ij}]\in M_n(R)$.
    Note that, 
    for each $A\in M_n(R)$,   $\det(\beta(A))=\gamma^s+ \gamma^{e-1}R$ if and only if $\det (A)= \gamma^s+ \gamma^{e-1}b$  for some  $b\in V$, where $V$ is defined in Lemma \ref{lem:propCR}.
    Since $1\leq e-s-1< e-1$, it follows that  $1+\gamma^{e-s-1}b$ is a unit in $U(R)$. Hence,  
    \begin{align*}
        |\{A\in M_n(R)\mid  &\det(A) = \gamma^s+ \gamma^{e-1}b\}|\\
        &= |\{A\in M_n(R)\mid \det(A) = \gamma^s(1+ \gamma^{e-s-1}b)\}|\\
        &= |\{A\in M_n(R)\mid \det(A) = \gamma^s \}|\\
        &=d_n(R, \gamma^s).
    \end{align*}	  
    Equivalently, 
    \begin{align}\label{eq11}
        |\{A\in M_n(R)\mid \det(\beta(A)) = \gamma^s+ \gamma^{e-1}R \}|
        &= |V|d_n(R, \gamma^s)= qd_n(R, \gamma^s).
    \end{align}	  
    
    As in  the proof of Lemma \ref{GLnF}, we have  $|\ker (\beta)| =q^{n^2}$. Hence,
    \begin{align}\label{eq22}
        |\{A\in M_n(R)\mid &\det(\beta(A)) = \gamma^s+ \gamma^{e-1}R  \}|\notag \\
        &= q^{n^2}|\{B\in M_n(R/\gamma^{e-1}R)\mid \det(B) = \gamma^s +\gamma^{e-1}R\}|\notag\\
        &= q^{n^2}d_n(R/\gamma^{e-1}R, \gamma^s+\gamma^{e-1}R).
    \end{align}	  
    Combining \eqref{eq11} and \eqref{eq22},  it can be concluded that 
    \[ qd_n(R, \gamma^s) = q^{n^2}d_n(R/\gamma^{e-1}R, \gamma^s+\gamma^{e-1}R).\] 
    Therefore,  \[d_n(R,\gamma^s)=q^{(n^2-1)}d_n(R/\gamma^{e-1}R,\gamma^s+ \gamma^{e-1}R)\] as desired.
\end{proof}
Applying Lemma \ref{lems} recursively,  the next  corollary follows.
\begin{corollary}\label{corlast} Let $R$ be a CFCR of nilpotency index $e+f$ and residue field $\mathbb{F}_q$, where $2\leq e$ and $1\leq f$ are integers.
    If the maximal ideal of $R$ is generated by $\gamma$, then \[d_n(R,\gamma^s)=q^{f(n^2-1)}d_n(R/\gamma^eR,\gamma^s+ \gamma^eR)\] for all $1\leq s<e$.
\end{corollary}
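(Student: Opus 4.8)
The plan is to prove the identity by induction on $f$, peeling off one factor of $\gamma$ at each stage by a single application of Lemma \ref{lems} and accumulating the corresponding factor $q^{n^2-1}$. Throughout I would rely on Lemma \ref{lem:propCR}(6), which ensures that each quotient $R/\gamma^i R$ is again a CFCR of nilpotency index $i$ and residue field $\mathbb{F}_q$, with maximal ideal generated by the image of $\gamma$; this is precisely what allows Lemma \ref{lems} to be reapplied to the successive quotients.

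For the base case $f=1$, the ring $R$ has nilpotency index $e+1$, which is at least $3$ because $e\geq 2$, and the assumption $1\leq s<e=(e+1)-1$ is exactly the exponent range required by Lemma \ref{lems}. Applying that lemma directly gives $d_n(R,\gamma^s)=q^{n^2-1}d_n(R/\gamma^e R,\gamma^s+\gamma^e R)$, which is the claim for $f=1$.

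For the inductive step I would assume the statement for $f-1\geq 1$ and let $R$ have nilpotency index $e+f$. First I apply Lemma \ref{lems} once to $R$: its index $e+f$ is at least $3$ and $s<e\leq e+f-1$, so $d_n(R,\gamma^s)=q^{n^2-1}d_n(S,\gamma^s+\gamma^{e+f-1}R)$, where $S:=R/\gamma^{e+f-1}R$. By Lemma \ref{lem:propCR}(6), $S$ is a CFCR of nilpotency index $e+(f-1)$ whose maximal ideal is generated by the image $\overline{\gamma}$ of $\gamma$, so the induction hypothesis applies to $S$ with the same $e$ and with $f-1$ in place of $f$, yielding $d_n(S,\gamma^s+\gamma^{e+f-1}R)=q^{(f-1)(n^2-1)}d_n\bigl(S/\overline{\gamma}^{\,e}S,\,\xi\bigr)$ for the appropriate determinant value $\xi$. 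The remaining step is to identify $S/\overline{\gamma}^{\,e}S=(R/\gamma^{e+f-1}R)/\gamma^e(R/\gamma^{e+f-1}R)$ with $R/\gamma^e R$ via the third isomorphism theorem, under which $\xi$ corresponds to $\gamma^s+\gamma^e R$. Substituting this back and combining the factors $q^{n^2-1}$ and $q^{(f-1)(n^2-1)}$ into $q^{f(n^2-1)}$ completes the induction.

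The step I expect to require the most care — though it is bookkeeping rather than a genuine difficulty — is verifying that the hypotheses of Lemma \ref{lems} remain valid at every application: one must confirm that the nilpotency index never falls below $3$ before the process stops (the smallest index reached just before termination is $e+1\geq 3$) and that the exponent constraint $s<(\text{current index})-1$ is preserved throughout, both of which follow from the single assumption $s<e$ together with the fact that the current index stays at least $e+1$ until the reduction lands at $R/\gamma^e R$. I would also make the canonical identification of the iterated quotient with $R/\gamma^e R$ explicit, so that the tracked determinant value is unambiguously $\gamma^s+\gamma^e R$.
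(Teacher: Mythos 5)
Your proposal is correct and follows essentially the same route as the paper, which simply applies Lemma \ref{lems} recursively; your induction on $f$, with the hypothesis checks ($e+f\geq 3$, $s<e\leq$ current index $-1$) and the third-isomorphism-theorem identification of the iterated quotient with $R/\gamma^e R$, is just a careful formalization of that recursion.
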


Now, we are ready to determined  the number  $d_n(R,\gamma^s) $ of $n\times n$ matrices over a CFCR $R$ whose determinant is $\gamma^s$.

\begin{theorem}\label{a=r} Let $R$ be a CFCR of nilpotency index $e$ and residue field $\mathbb{F}_q$ and let $n$ be a positive integer.  If the maximal ideal of $R$ is generated by $\gamma$,  then \[d_n(R,\gamma^s)=\frac{q^n-1}{q-1}q^{en^2-n-e+1}\prod_{i=1}^{n-1} (1-q^{-s-i})\]
    for all  integers $1\leq s<e$.
\end{theorem}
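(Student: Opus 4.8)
The plan is to isolate the \emph{top} zero-divisor $s=e-1$ as a base case and then reduce every $1\le s<e-1$ to it through Corollary \ref{corlast}. For the base case I would count the set $S=\{A\in M_n(R)\mid \det(A)\in\gamma^{e-1}R\}$ in two different ways. First, by Lemma \ref{lem:propCR} the minimal ideal $\gamma^{e-1}R$ has exactly $q$ elements, namely $\gamma^{e-1}v$ with $v\in V$, and the $q-1$ nonzero ones are all of the form $\gamma^{e-1}\cdot(\text{unit})$. Hence Theorem \ref{ar=r} gives the disjoint decomposition
\[ S=D_n(R,0)\ \sqcup\ \bigsqcup_{v\in V\setminus\{0\}}D_n(R,\gamma^{e-1}v), \]
so that $|S|=d_n(R,0)+(q-1)\,d_n(R,\gamma^{e-1})$. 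Second, using the reduction homomorphism $\beta\colon M_n(R)\to M_n(R/\gamma^{e-1}R)$ from Lemma \ref{GLnR}, the condition $\det(A)\in\gamma^{e-1}R$ is exactly $\det(\beta(A))=0$; since $\beta$ is surjective with $|\ker\beta|=q^{n^2}$, this yields $|S|=q^{n^2}d_n(R/\gamma^{e-1}R,0)$.

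Equating the two counts and substituting Theorem \ref{thma0} for both $d_n(R,0)$ (index $e$) and $d_n(R/\gamma^{e-1}R,0)$ (index $e-1$), I would obtain
\[ (q-1)\,d_n(R,\gamma^{e-1})=q^{en^2}\Bigl(\prod_{i=0}^{n-1}(1-q^{-e-i})-\prod_{i=0}^{n-1}(1-q^{-e+1-i})\Bigr). \]
The two products differ only in their extreme factors, so their difference telescopes to $q^{-e+1}(1-q^{-n})\prod_{i=0}^{n-2}(1-q^{-e-i})$. Dividing by $q-1$, rewriting $1-q^{-n}=(q^n-1)/q^{n}$, and reindexing $\prod_{i=0}^{n-2}(1-q^{-e-i})=\prod_{i=1}^{n-1}(1-q^{-(e-1)-i})$ then gives precisely the asserted formula in the case $s=e-1$.

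For a general $1\le s<e-1$, I would pass to the quotient $R/\gamma^{s+1}R$, which is a CFCR of nilpotency index $s+1$ in which $\gamma^s$ is the top zero-divisor. Corollary \ref{corlast}, applied by writing the nilpotency index as $e=(s+1)+(e-s-1)$, gives $d_n(R,\gamma^s)=q^{(e-s-1)(n^2-1)}d_n(R/\gamma^{s+1}R,\gamma^s)$, and the base case already computed above supplies $d_n(R/\gamma^{s+1}R,\gamma^s)$. Multiplying the two factors, the power of $q$ simplifies to $en^2-n-e+1$ (independent of $s$) while the product of the $(1-q^{-s-i})$ terms is unchanged, yielding the stated closed form; the endpoints $n=1$ and $s=e-1$ are then checked directly against the formula.

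The conceptual crux is the double counting of $S$: recognizing that ``$\det(A)$ lies in the minimal ideal'' is simultaneously a union of determinant-classes governed by Theorem \ref{ar=r} and the pullback of the singular locus over $R/\gamma^{e-1}R$ governed by Theorem \ref{thma0}. I expect the main obstacle to be purely computational, namely controlling the telescoping difference $\prod(1-q^{-e-i})-\prod(1-q^{-e+1-i})$ and reconciling the shifted product index ranges in the two cases; once the base-case identity is in hand, the reduction step is routine exponent bookkeeping.
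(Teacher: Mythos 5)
Your proposal is correct and is essentially the paper's own argument: your double count of $S=\{A\in M_n(R)\mid \det(A)\in\gamma^{e-1}R\}$ (split into determinant classes via Theorem \ref{ar=r} on one side, and fibers of the reduction map with $|\ker\beta|=q^{n^2}$ on the other) is exactly the paper's identity $q^{n^2}d_n(R/\gamma^{s}R,0)=d_n(R/\gamma^{s+1}R,0)+(q-1)\,d_n(R/\gamma^{s+1}R,\gamma^{s}+\gamma^{s+1}R)$ specialized to the top level $s=e-1$, and both proofs then combine Corollary \ref{corlast} with Theorem \ref{thma0} and the same telescoping simplification. The only difference is organizational: you obtain the closed form once in the base case and lift it to general $s$ by Corollary \ref{corlast}, whereas the paper performs the identical count inside the quotient $R/\gamma^{s+1}R$ for general $s$ before substituting Theorem \ref{thma0}.
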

\begin{proof}
    Let  $1\leq s<e$ be an integer and let  $\mu:M_n(R/ \gamma^{s+1}R)\to M_n(R/ \gamma^{s}R)$ be a ring homomorphism defined by 
    \[\mu(A)=\overline{A},\]
    where  $\overline{[a_{ij} +\gamma^{s+1}R]}:=[a_{ij} +\gamma^{s}R]$ for all $[a_{ij}+\gamma^{s+1}R]\in M_n(R/ \gamma^{s+1}R)$.  Then,  for each $A\in M_n(R/ \gamma^{s+1}R)$, $\det(\mu(A))=0+\gamma^{s}R $ if and only if $\det(A)=\gamma^{s} b+\gamma^{s+1}R$ for some $b\in V$, where $V$ is defined in Lemma \ref{lem:propCR}.  Since $|\ker(\mu)|=q^{n^2}$,  we have 
    \begin{align*}
        q^{n^2} d_n(R/\gamma^{s}R,0+\gamma^{s}R)
        &= |\ker(\mu)|d_n(R/\gamma^{s}R,0+\gamma^{s}R)\\&=|M_n(R/ \gamma^{s+1}R)|\\
        &=d_n(R/\gamma^{s+1}R,0+\gamma^{s+1}R)\\ &\quad +\sum_{b\in V\setminus \{0\}} d_n(R/\gamma^{s+1}R,\gamma^{s}b+\gamma^{s+1}R)\\
        &=d_n(R/\gamma^{s+1}R,0+\gamma^{s+1}R)\\
        & \quad +(q-1)	d_n(R/\gamma^{s+1}R,\gamma^{s}+\gamma^{s+1}R)
    \end{align*}
    by  Theorem \ref{ar=r}.    Hence,  we have 
    \begin{align} \label{eq1}
        d_n(R/\gamma^{s+1}&R,\gamma^{s}+\gamma^{s+1}R)\notag  \\
        &= \frac{1}{q-1}\left(q^{n^2} d_n(R/\gamma^{s}R,0+\gamma^{s}R)-d_n(R/\gamma^{s+1}R,0+\gamma^{s+1}R)\right).
    \end{align}
    By Corollary \ref{corlast},   we have 
    \begin{align} \label{eq2}
        d_n(R,\gamma^{s}) &= 	d_n(R/\gamma^{e+1+(s-e-1)}R,\gamma^{s}+\gamma^{e+1+(s-e-1)}R) \notag \\
        &=q^{(e-s-1)(n^2-1)}  d_n(R/\gamma^{s+1}R,\gamma^{s}+\gamma^{s+1}R).
    \end{align}
    Combining (\ref{eq1}) and (\ref{eq2}),  it follows that 
    \begin{align*}  
        d_n(R,\gamma^{s})  
        &=\frac{q^{(e-s-1)(n^2-1)} }{q-1} \Bigg(q^{n^2} d_n(R/\gamma^{s}R,0+\gamma^{s}R)\\
        &\quad \hskip10em   -d_n(R/\gamma^{s+1}R,0+\gamma^{s+1}R)\Bigg).
    \end{align*}
    Applying Theorem \ref{thma0}, we have 
    \begin{align*}  
        d_n(R,\gamma^{s})  
        &=\frac{q^{(e-s-1)(n^2-1)} }{q-1} \left(q^{n^2} q^{sn^2}\left(1-\prod_{i=0}^{n-1} (1-q^{-s-i})\right) \right.\\
        &\hskip10em\left.-q^{(s+1)n^2}\left(1-\prod_{i=0}^{n-1} (1-q^{-s-1-i})\right)\right)\\
        &=\frac{q^{n^2e-e+s+1}}{q-1} 
        \left( -\prod_{i=0}^{n-1} (1-q^{-s-i})
        +\prod_{i=0}^{n-1} (1-q^{-s-1-i})\right) \\
        &=\frac{q^{n^2e-e+s+1}}{q-1} \prod_{i=1}^{n-1} (1-q^{-s-i})\left( (1-q^{-s-n})-(1-q^{-s})\right)\\
        &=\frac{q^{n^2e-e+1}}{q-1}(1-q^{-n})\prod_{i=1}^{n-1} (1-q^{-s-i})\\
        &=\frac{q^n-1}{q-1}q^{en^2-n-e+1}\prod_{i=1}^{n-1} (1-q^{-s-i})
    \end{align*}
    as desired.
\end{proof}

%

\section{Determinants of Matrices over Finite Principal Ideal Rings}

In this section, we focus on a more general case. The number of $n\times n$ matrices of a fixed determinant  over   CFPIRs is determined.  

Let $\mathcal{R}$ be a CFPIR. With out loss of generality, by Proposition \ref{eqiv},  it can be assume that  $\mathcal{R}=R_1\times R_2\times \dots\times R_m$ for some positive  integer $m$, where $R_i$ is a CFCR for all $1\leq i\leq m$.  For each $1\leq i\leq m$, let $\phi_i:\mathcal{R}\to R_i$ be a projection map defined by \[\phi_i((r_1,r_2,\dots,r_m))=r_i.\]  Note that  $\phi_i$ is a surjective ring homomorphism for all $1\leq i\leq m$.


The number of $n\times n$ matrices of a fixed determinant  over   $\mathcal{R}$ can be determined as follows.
\begin{theorem} Let  $\mathcal{R}=R_1\times R_2\times R_m$ be a CFPIR where $R_1, R_2, \dots, R_m$  be  CFCRs and let $n$ be a positive integer. Let $r\in \mathcal{R}$ and let  $\phi_i$'s  be defined as above.  Then \[ d_n(\mathcal{R},r)=d_n(R_1,\phi_1(r))d_n(R_2,\phi_2(r)),\dots, d_n(R_m,\phi_m(r) ).\]
\end{theorem}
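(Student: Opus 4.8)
The plan is to reduce the statement to the well-known fact that forming matrices and taking determinants both commute with the direct-product decomposition of the underlying ring. Concretely, for each $1\le i\le m$ I would extend the projection $\phi_i:\mathcal{R}\to R_i$ to a map $\Phi_i:M_n(\mathcal{R})\to M_n(R_i)$ acting entrywise, i.e. $\Phi_i([a_{jk}])=[\phi_i(a_{jk})]$. Since $\phi_i$ is a surjective ring homomorphism, each $\Phi_i$ is a surjective ring homomorphism, and the combined map $A\mapsto(\Phi_1(A),\dots,\Phi_m(A))$ is a ring isomorphism $M_n(\mathcal{R})\xrightarrow{\ \sim\ }M_n(R_1)\times\cdots\times M_n(R_m)$, because an entry of $A$ is a tuple in $R_1\times\cdots\times R_m$ and reassembling these tuples coordinatewise is a bijection on the level of entries, hence on the level of matrices.

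The key structural observation I would isolate is that the determinant is compatible with these homomorphisms: for every $A\in M_n(\mathcal{R})$,
\[
\phi_i(\det A)=\det(\Phi_i(A))\qquad\text{for all }1\le i\le m.
\]
This follows immediately from the Leibniz expansion $\det A=\sum_{\sigma\in S_n}\mathrm{sgn}(\sigma)\prod_{j=1}^{n}a_{j\,\sigma(j)}$, which expresses $\det A$ as a fixed integer-coefficient polynomial in the entries of $A$; since $\phi_i$ is a ring homomorphism it commutes with sums and products, so applying $\phi_i$ to $\det A$ is the same as evaluating that polynomial on the entries $\phi_i(a_{jk})$, which is exactly $\det(\Phi_i(A))$. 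Equivalently, under the isomorphism above the determinant of $A$ is computed coordinatewise as $\det A=(\det\Phi_1(A),\dots,\det\Phi_m(A))$ in $\mathcal{R}=R_1\times\cdots\times R_m$.

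With these two facts in hand the counting is immediate. Writing $r=(\phi_1(r),\dots,\phi_m(r))$, equality of tuples gives $\det A=r$ if and only if $\det(\Phi_i(A))=\phi_i(r)$ for every $i$. Hence the isomorphism $A\mapsto(\Phi_1(A),\dots,\Phi_m(A))$ restricts to a bijection
\[
D_n(\mathcal{R},r)\ \longrightarrow\ D_n(R_1,\phi_1(r))\times\cdots\times D_n(R_m,\phi_m(r)).
\]
Taking cardinalities yields $d_n(\mathcal{R},r)=\prod_{i=1}^m d_n(R_i,\phi_i(r))$, which is the claimed formula. If one prefers, the same conclusion can be reached by first establishing the two-factor case $d_n(R_1\times R_2,r)=d_n(R_1,\phi_1(r))\,d_n(R_2,\phi_2(r))$ and then inducting on $m$.

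I do not expect a genuine obstacle here; the whole content is bookkeeping around the two compatibility statements. The only point that needs care is verifying that the determinant commutes with the entrywise homomorphisms, and the Leibniz formula makes this transparent. Everything else is the standard identification of matrices over a product ring with tuples of matrices over the factors, together with the fact that each $d_n(R_i,\phi_i(r))$ for the chain-ring factors is already completely determined by the results of Section~3.
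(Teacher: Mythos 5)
Your proposal is correct and follows essentially the same route as the paper: identify $M_n(\mathcal{R})$ with $M_n(R_1)\times\cdots\times M_n(R_m)$ via the entrywise ring isomorphism and observe that this bijection restricts to one between $D_n(\mathcal{R},r)$ and $D_n(R_1,\phi_1(r))\times\cdots\times D_n(R_m,\phi_m(r))$. The only differences are cosmetic: you handle all $m$ factors at once where the paper treats $m=2$ and inducts, and you make explicit (via the Leibniz expansion) the determinant-compatibility step that the paper uses tacitly.
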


\begin{proof}
    It is sufficient  to prove only  the case $\mathcal{R}=R_1\times R_2$. The rest can be obtained by  induction on $m$. 
    
    From the definition of $d_n(\mathcal{R},r)$, we show that 
    \[|D_n(\mathcal{R},r)|=|D_n(R_1,\phi_1(r))||D_n(R_2,\phi_2(r))|.\]  
    
    Let $\Phi: M_n(\mathcal{R}) \to  M_n( R_1)\times M_n(R_2)$ be a ring isomorphism defined by 
    \[[a_{ij}]\mapsto  ([\phi_1(a_{ij})], [\phi_2(a_{ij})]) . \]  
    Since   $\Phi$ is injective, it suffices to show that the isomorphism  $\Phi$ maps  $D_n(\mathcal{R},r)$ onto  $D_n(R_1,\phi_1(r))\times D_n(R_2,\phi_2(r))$.   Since $r=(\phi_1(r),\phi_2(r) ) $, we have  \[\Phi(D_n(\mathcal{R},r))\subseteq D_n(R_1,\phi_1(r))\times D_n(R_2,\phi_2(r)).\]  Let $(B_1,B_2)\in  D_n(R_1,\phi_1(r))\times D_n(R_2,\phi_2(r))$.  Since $\Phi$ is surjective, there exists $B\in M_n(\mathcal{R})$ such that $\Phi(B)=(B_1,B_2)$ and  $\det(B_1)=\phi_1(r)$ and  $\det(B_2)=\phi_2(r)$.  As  $r=(\phi_1(r),\phi_2(r) ) $, it follows that  $\det(B)=r$, and hence,   $A\in D_n(\mathcal{R},r)$.   Therefore, $|D_n(\mathcal{R},r)|=|D_n(R_1,\phi_1(r))||D_n(R_2,\phi_2(r))|$ as desired.
\end{proof}

\section{Conclusion Remarks}
Determinants of matrices over CFCRs (resp.,  CFPIRs) $R$  are studied. For a given positive integer $n$ and $a\in R$,    the number of  $n\times n$ matrices of determinant $a$ over  ${R}$  is determined.  These generalize the results  on  the determinants of matrices over  $\mathbb{Z}_m$  in \cite{LW2007}.   This counting problem over commutative  finite local rings or over arbitrary commutative finite rings would be also  interesting.

\section*{Acknowledgements}
The authors wold like to  thank    the anonymous referees for there helpful comments. This work   is supported by the Thailand Research Fund  under Research Grant TRG5780065.


\end{document}